\newcommand{\SortNoop}[1]{}
\newcommand{\pt}{\forall}
\newcommand{\sii}{\Leftrightarrow}
\newcommand{\plonge}{\hookrightarrow}
\newcommand{\mc}[1]{\mathcal{#1}}
\newcommand{\sd}{\sigma\delta}
\newcommand{\benum}{\begin{enumerate}}
\newcommand{\eenum}{\end{enumerate}}
\newtheorem{thm}{Theorem}[section]
\newtheorem{prop}[thm]{Proposition}
\newtheorem{cor}[thm]{Corollary}
\newtheorem{lm}[thm]{Lemma}
\begin{document}
\begin{titlepage}
\title{On $\sigma\delta$-Picard-Vessiot extensions}
\author{Ana Peón Nieto\footnote{Université Paris 7 Denis-Diderot, Équipe de Logique Mathématique, UFR de mathématiques, case 7012, site Chevaleret
75205 Paris Cedex 13 France, apeon@logique.jussieu.fr}}
\maketitle{}
\abstract{We study the differential Galois theory of difference equations under weaker hypothesis on the field of $\sigma$-constants. This framework yields a new approach to results by C.Hardouin and M.Singer, which answers possitively a question by M.Singer: under the classical hypothesis, the known results are still valid. In particular, our Galois group is isomorphic to theirs over a suitable field.  We also explicitly calculate the number of   connected components of the Galois group.}

\end{titlepage}
\tableofcontents
\newpage

\section{Introduction}
In \cite{HS} Hardouin and Singer study the Galois theory of linear difference-differential equations defined over a characteristic $0$ difference-differential field $k$ on which a set of commuting operators $\Sigma,\ \Delta$ and $\Pi$ act, and whose field of $\Sigma\Delta$-constants, $k^{\Sigma\Delta}$, is $\Pi$-differentially closed\footnote{Here $\Sigma$ and $\Pi$ are the sets of automorphisms and linear derivations respectively appearing in the equations under study, and $\Pi$ is a set of arbitrary derivations, the requirements on which are just commutativity with $\Delta\cup\Sigma$.}. In what follows, we answer a question by Michael Singer on whether the approach done in \cite{CHS} for the $\Sigma$-systems works also in this case, i.e., can we weaken the hypothesis on the field of constants without essentially altering the results? The answer is yes for the particular case $\Sigma=\{\sigma\},\ \Delta=\emptyset,\ \Pi=\{\delta\}$: supposing the field of $\sigma$-constants $C_k$ to be relatively algebraically closed inside $k$, we build a Galois group $\mathbb{H}$ that turns out to be isomorphic to the one in \cite{HS} once the constants have been suitably increased. Concerning the structure of the Galois group, we obtain an explicit description of the Picard-Vessiot extension $S$ as a direct sum of domains, which allows to calculate the number of connected components of the Galois group. Similarly, our group $\mathbb{H}$ is dense (in a certain sense) inside the Galois groups of difference equations naturally related to our difference-differential system. The methods used here follow those developped in \cite{CHS} for the difference case.
\\For the sake of comparison, we include a summary of the steps of \cite{HS} that can be contrasted with results in the present article:
\benum
\item Hardouin-Singer:
\benum
\item Uniqueness of the Picard-Vessiot extension for the given system of $\Sigma\Delta$-equations, $S=k\{Z,det(Z)^{-1}\}$.
\item Galois correspondence.
\item Association of a $\Sigma\Delta$-Picard-Vessiot extension $S_0=k[Z,det(Z)^{-1}]$, satisfying that $Gal_{\Sigma\Delta\Pi}(S/k)\subset Gal_{\Sigma\Delta}(S_0/k)$ is a dense subgroup in the Kolchin topology (c.f. Proposition 6.21 in \cite{HS} and Propostion \ref{intermediategroups} in here).
\item $\Sigma$-simplicity of $S$(c.f. Corollary 6.22 in \cite{HS}).
\eenum
\item Present:
\benum
\item Explicit construction of a $\sd$-ring $R$ which is a Picard-Vessiot extension over $kC_L$ for $\sigma(X)=AX$, and which is simple as a $\sigma$- ring, being the union of simple $\sigma$-rings $R_i$ (see Proposition \ref{espicardvessiot}).
\item Description of the differential group $\mathbb{H}(C_L)=Aut_{\sd}(R/kC_L)$ in terms of the algebraic groups $\mathbb{H}_i(C_L)=Aut_\sigma(R_i/kC_L)$ (see Proposition 6.21 in \cite{HS} and \ref{intermediategroups} here)
\item If the field $K=k(C)$ is generated over $k$ by $C$, a differentially closed field containing $C_L$ and on which $\sigma$ is the identity, then $S=R\otimes_{kC_L}K$ is a simple $\sigma$-ring and is therefore isomorphic to the (unique) Picard-Vessiot extension of Hardouin-Singer (see 6.16 in \cite{HS}, and Theorem \ref{decdirectsum} here). We then have $Aut_{\sd}(S/K)=\mathbb{H}(C)$.
\item A finer study of $R$ and of its field of fractions $L$ gives the number of idempotents of $S$ and the index of the connected component $\mathbb{H}^0$ of $\mathbb{H}$ (c.f. Theorem \ref{decdirectsum}, Lemma \ref{haciaconcom} and Proposition \ref{concom} in here).
\eenum
\eenum

I wish to thank my advisor, Zoé Chatzidakis, for invaluable discussions and support during all this time.

\section{The main result}
\underline{Notation and terminology}\\
All fields are supposed to be of characteristic $0$. Let $k$ be a field/ring with an automorphism $\sigma$ and a derivative $\delta$.
\begin{displaymath}
\begin{array}{ll}\nonumber 
qr(A), A^*&\textrm{the total ring of quotients of the ring }A\\\nonumber
frac(D)&\textrm{the fraction field of a domain }D\\\nonumber
k^a& \textrm{the algebraic closure of the field }k\\\nonumber
C_k/Fix(\sigma)(k)&\{ x\in k:\ \sigma(x)=x\}\textrm{ for a given field/ring }k\\\nonumber
D_k& C_k^a\cap k\textrm{, the relative algebraic closure of } C_k\textrm{ in }k.\\\nonumber
Aut_{\sigma}(K/k)&\textrm{ the group of $k$-$\sigma$-automorphisms of the ring $K$}\\\nonumber
Aut_{\sd}(K/k)& \textrm{the group of $k$-$\sd$-automorphisms of the ring $K$}\\\nonumber
k\{a_1,\dots,\ a_n\} &\textrm{the $\delta$-differential ring generated over $k$ by the $n$-tuple ($a_1,\dots,a_n$)}\\\nonumber
k\langle a_1,\dots,\ a_n\rangle &\textrm{the $\delta$-differential field generated over $k$ by the $n$-tuple ($a_1,\dots,a_n$)}\\\nonumber
\end{array}
\end{displaymath}

\begin{prop}\label{espicardvessiot}(Construction of the Picard-Vessiot extension.)
Let $k$ be a $\sigma\delta$-field of characteristic 0 on which $\sd=\delta\sigma$. Let $A\in GL_m(k)$, and consider the field $L=k\langle X, det (X)^{-1}\rangle$ (for some $m\times m$ matrix of indeterminates $X$) on which we extend the action of the automorphism by setting: 
\begin{eqnarray}\label{ecuacion}
\sigma(X)&=&AX
\end{eqnarray}
and commutativity of $\sigma$ and $\delta$. Suppose that $k$ satisfies: \[D_k=C_k\] (i.e., the constant field of $k$ is relatively algebraically closed in k.) Then $kC_L\{X,det(X)^{-1}\}$ is a Picard-Vessiot extension over $kC_L$ for the equation above, and is a simple $\sigma$-ring.
\end{prop}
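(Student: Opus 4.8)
The plan is to make $\sigma$-simplicity the core of the argument and to read off the Picard--Vessiot property from it together with a computation of constants. Write $R=kC_L\{X,\det(X)^{-1}\}$, and for each $i\ge 0$ let $R_i=kC_L[X,\delta X,\dots,\delta^i X,\det(X)^{-1}]$ be the ordinary (non-differential) $kC_L$-subalgebra of $L$ generated by the entries of $X,\delta X,\dots,\delta^i X$ together with $\det(X)^{-1}$. Since $\delta$ raises the order by one we get a directed union $R=\bigcup_{i\ge 0}R_i$, and each $R_i$ is $\sigma$-stable: differentiating $\sigma(X)=AX$ and using $\sd=\delta\sigma$ gives $\sigma(\delta^j X)=\sum_{l=0}^{j}\binom{j}{l}\delta^{j-l}(A)\,\delta^l X$, while $\sigma(\det(X)^{-1})=\det(A)^{-1}\det(X)^{-1}$. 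The first step is then the elementary remark that a directed union of $\sigma$-simple rings is $\sigma$-simple: a nonzero $\sigma$-ideal $I\subseteq R$ contains some $0\ne a\in R_i$, so $I\cap R_i$ is a nonzero $\sigma$-ideal of $R_i$, hence all of $R_i$, whence $1\in I$. This reduces everything to proving that each $R_i$ is $\sigma$-simple.

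I would next identify $R_i$ as a difference (pure-$\sigma$) Picard--Vessiot ring over the $\sigma$-field $kC_L$. Stacking $X,\delta X,\dots,\delta^i X$ into the block lower-triangular matrix $\mc{X}_i$ whose $(a,b)$ block (for $a\ge b$) is $\binom{a}{b}\delta^{a-b}X$, the formula above yields $\sigma(\mc{X}_i)=A_i\mc{X}_i$, where $A_i\in GL_{(i+1)m}(kC_L)$ is block lower-triangular with $(a,c)$ block $\binom{a}{c}\delta^{a-c}(A)$; moreover $\det(\mc{X}_i)=\det(X)^{i+1}$ is already a unit of $R_i$. Thus $\mc{X}_i$ is a fundamental solution matrix of $\sigma(Y)=A_iY$ and $R_i=kC_L[\mc{X}_i,\det(\mc{X}_i)^{-1}]$. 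I can therefore invoke the difference Picard--Vessiot machinery of \cite{CHS} (replacing $k$ by its inversive closure if that machinery requires it, which affects neither $C_L$ nor the regularity of $k/C_k$): over a $\sigma$-field whose field of constants is relatively algebraically closed, a ring generated by a fundamental matrix is $\sigma$-simple provided its total ring of quotients contains no $\sigma$-constant beyond those of the base.

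Two hypotheses of this criterion then have to be checked, of which the constants computation is easy and the other is the delicate one. First, $R_i$ creates no new constants: $qr(R_i)\subseteq L$ has $\sigma$-constants inside $C_L$, while $C_L\subseteq kC_L\subseteq R_i$, so $C_{qr(R_i)}=C_L$, and the same inclusions give $C_{kC_L}=C_L$. Second, one needs $C_L=C_{kC_L}$ to be relatively algebraically closed in $kC_L$, i.e. $D_{kC_L}=C_L$, and this is exactly where the standing hypothesis $D_k=C_k$ enters. The point I expect to be the real obstacle is that $C_L$ is in general \emph{not} relatively algebraically closed in the full field $L$, so one cannot argue inside $L$ directly; instead one uses that in characteristic $0$ the hypothesis $D_k=C_k$ says precisely that $k/C_k$ is a regular field extension, and regularity is preserved under the base change $C_k\subseteq C_L$, so $kC_L/C_L$ is regular and hence $D_{kC_L}=C_L$. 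Granting this, \cite{CHS} yields that each $R_i$, and therefore $R$, is $\sigma$-simple. Finally, since every $\sd$-ideal is in particular a $\sigma$-ideal, $R$ is a fortiori $\sd$-simple; together with $R=kC_L\{X,\det(X)^{-1}\}$ being $\delta$-generated by the fundamental matrix $X$ of $\sigma(X)=AX$ and with $C_R=C_L=C_{kC_L}$, this is exactly the assertion that $R$ is a Picard--Vessiot extension over $kC_L$.
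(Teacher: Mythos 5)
Your proof follows essentially the same route as the paper's: filter $R=kC_L\{X,\det(X)^{-1}\}$ by order of derivatives, check $\sigma$-stability of the pieces via $\sigma(\delta^jX)=\sum_{l=0}^{j}\binom{j}{l}\delta^{j-l}(A)\,\delta^{l}X$, realize each piece as the ring generated by the block lower-triangular fundamental matrix $\mc{X}_i$ of an auxiliary difference system, obtain $\sigma$-simplicity of each piece from Corollary 4.12 of \cite{CHS} under the hypothesis $D_k=C_k$, and conclude by the directed-union argument. The one genuine divergence is the choice of coefficients in the filtration. The paper takes $F_n=kC_{K_n}[X,\dots,X^{(n)},\det(X)^{-1}]$ with $K_n=k\big(X,\det(X)^{-1},\dots,X^{(n)}\big)$, so the constants adjoined at level $n$ are exactly $C_{K_n}=Fix(\sigma)(K_n)$, the constants of the field generated by the level-$n$ solutions; this is precisely the format in which the paper invokes Corollary 4.12 of \cite{CHS}, whose only hypothesis is then $D_k=C_k$. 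You instead adjoin all of $C_L$ at every level, $R_i=kC_L[X,\dots,\delta^iX,\det(X)^{-1}]$, which obliges you to prove the supplementary statement $D_{kC_L}=C_L$ --- a statement the paper's filtration never needs.

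That supplementary step is where your argument has a real gap. You justify $D_{kC_L}=C_L$ by saying $k/C_k$ is regular and ``regularity is preserved under the base change $C_k\subseteq C_L$.'' But $kC_L$ is a compositum of two subfields of $L$, not an abstract base change, and for arbitrary subfields the implication is false: $k=\mathbb{Q}(\pi)$ is regular over $\mathbb{Q}$, yet for $C'=\mathbb{Q}(\pi\sqrt{2})\subset\mathbb{C}$ the compositum $kC'=\mathbb{Q}(\pi,\sqrt{2})$ contains $\sqrt{2}$, which is algebraic over $C'$ and not in $C'$, so $kC'/C'$ is not regular. What saves you is a difference-algebra fact, not a field-theoretic one: for any $\sigma$-field extension $L/k$, the fixed field $C_L$ is linearly disjoint from $k$ over $C_k$ (take a shortest $k$-linear dependence among elements of $C_L$ independent over $C_k$, normalize one coefficient to $1$, apply $\sigma$ and subtract to get a shorter one). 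Only this identifies $kC_L$ with $frac(k\otimes_{C_k}C_L)$, after which preservation of regularity under genuine base change applies and gives $D_{kC_L}=C_L$; you must state and use this lemma. Alternatively, you can bypass regularity entirely: the same linear-disjointness lemma applied to the extension $L/K_i$ gives $R_i\cong F_i\otimes_{C_{K_i}}C_L$, and Lemma 1.11 of \cite{SVDP} (tensoring a simple $\sigma$-ring with a field of new constants on which $\sigma$ acts trivially preserves simplicity) then deduces the simplicity of your $R_i$ from that of the paper's $F_i$. With either repair, the rest of your proof --- the computation $C_{qr(R_i)}=C_L=C_{kC_L}$, the union argument, and the reading-off of the Picard--Vessiot property from simplicity plus $\delta$-generation by the fundamental matrix --- is correct.
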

\begin{proof}

For the proof we will consider the intermediate $\sigma$-rings and fields:
\[
K_n=k\big(X,det(X)^{-1},X',\dots,X^{(n)}\big)
\]
\[
F_n=kC_{K_n}[X,det(X)^{-1},X',\dots,X^{(n)}]
\]
 on which the action of $\sigma$ is defined by restriction. We have thus:
\begin{equation}\label{system}
\sigma(X^{(j)})=\sum_{i=0}^{j}\binom{j}{i}A^{(i)}X^{(j-i)}\qquad \pt j=0,1,\dots, n
\end{equation}
We will prove that the $F_n$'s are simple $\sigma$-rings $\pt n\in\mathbb{N}$. Once this has been proved we are done. Indeed, if $I\subset kC_L\{X,det(X)^{-1}\}$ is a non-zero proper $\sigma$-ideal, for all $n\in\mathbb{N}\quad I\cap kC_{K_n}[X,det(X)^{-1},X',\dots,X^{(n)}]$ is a $\sigma$-ideal. Since it is non-zero, 
\[
I\cap kC_{K_n}[X,det(X)^{-1},X',\dots,X^{(n)}]\neq 0
\]
for some $n\in\mathbb{N}$. But this implies that $I\cap F_n\neq 0$ for some $n\in\mathbb{N}$. Thus $1\in I\cap F_n\subset I$. So $kC_L\{X,det(X)^{-1}\}$ is a simple $\sigma$-ring, thus a simple $\sd$-ring, generated over $kC_L$ (as a $\delta$-ring) by a fundamental solution of the $\sigma$-equation, and so a Picard-Vessiot extension over $kC_L$.

To check the simplicity of $F_n$, we will consider the $kC_{K_n}$'s as $\sigma$-fields. Then (\ref{system}) defines a system of difference equations over $kC_{K_n}$, with the following associated matrix:
\begin{displaymath}
\mathcal{A} =
\left( \begin{array}{cccccccc}
A & 0 & 0& 0& \ldots&\quad&0 \\
A'&  A & 0& 0& \ldots&\quad&0\\
A''& 2A' & A&0& \ldots &\quad&0\\
A'''&3A''&3A'& A& \ldots&\quad& 0\\
\vdots&\vdots&\vdots&\vdots&\ddots&\quad&\vdots\\
A^{(n)}&\binom{n}{1}A^{(n-1)}&\binom{n}{2}A^{(n-2)}&\binom{n}{3}A^{(n-3)}&\ldots&\quad&A
\end{array} \right)
\end{displaymath}
Note that $(X,\dots,X^{(n)})^T$ is an obvious solution of (\ref{system}).
\\
\underline{Claim:} there is a fundamental matrix for the system ($\ref{system}$) with entries in $F_n$.
Indeed, consider the following matrix, defined by blocks:
\begin{displaymath}
\mathcal{X} =
\left( \begin{array}{ccccccc}
X & 0& 0& 0&\ldots&  \quad &0\\
X' &X& 0 &0   &\ldots&  \quad &0 \\
X''& 2X'& X & 0& \ldots&  \quad &0\\
X''' &3X'' &3X' &X &  \ldots&\quad& 0\\
\vdots& \vdots& \vdots& \vdots& \ddots& \quad&\vdots\\
X^{(n)} &\binom{n}{1}X^{(n-1)}& \binom{n}{2}X^{(n-2)}& \binom{n}{3}X^{(n-3)}& \ldots&\quad&  X
\end{array} \right)
\end{displaymath}
It is easily checked that $\mathcal{X}$ is a solution.
\\Given that $C_k=D_k$, Corollary 4.12 in \cite{CHS} 
 implies that $F_n/kC_{K_n}$ is a Picard-Vessiot extension.
\end{proof}

We can now proceed to the description of the group $Aut_{\sd}(R^*/kC_L),\ R^*=qr(R)$. The strategy followed is inspired by the one found in \cite{CHS}: we define the automorphism group independently of any considerations on the constants, as being the stabiliser of the differential locus of a generic solution; then, uniqueness of the Picard-Vessiot extension when the constants are differentially closed will imply that the group obtained is isomorphic to the one defined in \cite{HS} over a suitable field.
\begin{prop}Let 
\[
G=Aut_{\sd}(kC_L\langle X \rangle /kC_L)
\]
Consider the $\delta$-ideal $I_\delta\subset kC_L\{Y\}$ of differential polynomials over $kC_L$ that vanish at $X$. Let $GL_m(C_L)$ act on $kC_L\{Y\}$ by $\delta$-$kC_L$-automorphisms in the following way: to each $B\in GL_m(C_L)$ we associate the automorphim $g_B:\ Y\mapsto YB$.

Let $H=\{B\in GL_m(C_L):\ g_B\textrm{ leaves }I_\delta \textrm{ invariant }\}$.
Then $G\cong H$, so $G$ is the set of $C_L$-points of the linear differential group $\mathbb{H}$ defined over $kC_L$ by the property: ``$g_B$ leaves $I_\delta$ invariant''.

\end{prop}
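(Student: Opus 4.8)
The plan is to realise every element of $G$ as ``right multiplication of the fundamental solution $X$ by a constant matrix'', and then to identify the constant matrices that arise with the elements of $H$. The key preliminary observation is that $kC_L\langle X\rangle = L$: indeed $L=k\langle X, \det(X)^{-1}\rangle = k\langle X\rangle\subseteq kC_L\langle X\rangle$, while $k$, $C_L$ and the entries of $X$ all lie in $L$, so $kC_L\langle X\rangle\subseteq L$. Consequently the field of $\sigma$-constants of $kC_L\langle X\rangle$ is exactly $C_L$, and this is what will keep all matrices inside $GL_m(C_L)$. For $\tau\in G$ I would then set $B_\tau:=X^{-1}\tau(X)$.

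First I would show $\tau\mapsto B_\tau$ is a well-defined injective homomorphism into $GL_m(C_L)$. Since $\tau$ fixes $k$ (hence $A$) and commutes with $\sigma$, applying $\tau$ to $\sigma(X)=AX$ gives $\sigma(\tau(X))=A\tau(X)$, so $\tau(X)$ solves the same equation and $\sigma(B_\tau)=(AX)^{-1}A\tau(X)=B_\tau$; thus $B_\tau$ has entries in $C_L$, and it is invertible because $\tau(\det X)=\det(\tau X)$ is a unit. The computation $\tau_1\tau_2(X)=\tau_1(XB_{\tau_2})=XB_{\tau_1}B_{\tau_2}$ (using that $\tau_1$ fixes the constant entries of $B_{\tau_2}$) gives $B_{\tau_1\tau_2}=B_{\tau_1}B_{\tau_2}$, and $B_\tau=I$ forces $\tau(X)=X$, hence $\tau=\mathrm{id}$ since $kC_L$ and $X$ generate the extension.

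It then remains to identify the image with $H$. Let $\pi\colon kC_L\{Y\}\twoheadrightarrow kC_L\{X\}$ be the $\delta$-$kC_L$-epimorphism $Y\mapsto X$ with $\ker\pi=I_\delta$; note $\tau$ restricts to an automorphism of $kC_L\{X\}$ because $\tau(X)=XB_\tau\in kC_L\{X\}$. Both $\tau\circ\pi$ and $\pi\circ g_{B_\tau}$ are $\delta$-$kC_L$-homomorphisms sending $Y\mapsto XB_\tau$, so they coincide; comparing kernels and using injectivity of $\tau$ yields $g_{B_\tau}^{-1}(I_\delta)=\ker(\tau\circ\pi)=I_\delta$, whence $B_\tau\in H$. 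Conversely, given $B\in H$, the automorphism $g_B$ preserves $I_\delta$ and so descends to a $\delta$-$kC_L$-automorphism of $kC_L\{X\}$ with $X\mapsto XB$, extending to a $\delta$-$kC_L$-automorphism $\tau_B$ of $frac(kC_L\{X\})=L$. To place $\tau_B$ in $G$ I must check it commutes with $\sigma$: both $\sigma\tau_B$ and $\tau_B\sigma$ are $\delta$-endomorphisms of $L$ (as $\sigma$ and $\tau_B$ each commute with $\delta$), they agree on $k$, and on the generator one computes $\sigma\tau_B(X)=AX\sigma(B)=AXB=\tau_B(AX)=\tau_B\sigma(X)$ using $\sigma(B)=B$ and $\tau_B(A)=A$; since $k$ and $X$ $\delta$-generate $L$, they are equal. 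Thus $\tau_B\in G$ and $B_{\tau_B}=B$, giving $G\cong H$, and $H=\mathbb{H}(C_L)$ by the very definition of $\mathbb{H}$.

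The homomorphism and injectivity checks are routine; I expect the real work to lie in the two invariance statements, and in particular in the surjectivity step, where a condition phrased purely $\delta$-theoretically (preservation of $I_\delta$) must be upgraded to a genuine $\sd$-automorphism. That upgrade hinges on the two special features of the setup, namely that $B$ is $\sigma$-constant and that $A$ is fixed by $\tau_B$, which are precisely what make the $\sigma$-commutation on $X$ collapse to the trivial identity $AXB=AXB$; and the identification $kC_L\langle X\rangle=L$, ensuring the $\sigma$-constants do not grow, is what keeps every $B_\tau$ inside $GL_m(C_L)$ throughout.
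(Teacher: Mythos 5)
Your proof is correct and follows essentially the same route as the paper: both directions proceed by associating to $\tau\in G$ the constant matrix $X^{-1}\tau(X)\in GL_m(C_L)$ (using that $\tau(X)$ is again a fundamental solution of $\sigma(X)=AX$) and, conversely, lifting each $B\in H$ to the $\delta$-automorphism determined by $X\mapsto XB$. You supply details the paper leaves implicit---notably the identification $kC_L\langle X\rangle=L$, the kernel-comparison argument showing $B_\tau\in H$, and the explicit check that $\tau_B$ commutes with $\sigma$---but the strategy is identical.
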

\begin{proof}
It is easy to see that $G\plonge H$: given $g\in G$, its action is uniquely determined by the matrix $X^{-1}g(X)\in GL_m(C_L)$. Indeed, $g(X)$ is a fundamental matrix for the equation $\sigma(X)=AX$, hence $g(X)=XB_g$ for a unique $\ B_g\in GL_m(C_L)$. By commutativity with $\delta$, $g$ induces the desired action on $kC_L\{Y\}$. Finally, $G$ being a $\delta$-automorphism group over $kC_L$, it must leave $I_\delta$ invariant. So we have the desired inclusion.

Conversely, let $B\in H$. We may associate to this element the automorphism given by $g_B(X^{(n)})=\sum_{i=0}^n\binom{n}{i}X^{(i)}B^{(n-i)}$. Both maps are clearly the inverse of one another, and so they define an isomorphism and its inverse. It is clearly a differential morphism, and so induces a differential structure on $G$.
\end{proof}
\begin{cor}\label{intermediategroups} Let $\mathbb{H}=Aut_{\sd}(kC_L\langle X\rangle/kC_L)$ be the Galois group described above for the system of difference equations (\ref{ecuacion}). Then $\mathbb{H}$ is defined by
\begin{displaymath}
B\in\mathbb{H}\sii\pt n\in \mathbb{N}\left(\begin{array}{cccc}
                                                     B&0&\ldots&0\\
                                                     B'&B&\ldots&0\\
                                                     \vdots&\vdots&\ddots&\vdots\\
                                                     B^{(n)}&\cdots & &B
\end{array}\right)\in\mathbb{H}_n
\end{displaymath}
\begin{displaymath}
\sii\exists n_0:\pt n\in \{0,\dots, n_0\}\left(\begin{array}{cccc}
                                                     B&0&\ldots&0\\
                                                     B'&B&\ldots&0\\
                                                     \vdots&\vdots&\ddots&\vdots\\
                                                     B^{(n)}&\cdots & &B
\end{array}\right)\in\mathbb{H}_n
\end{displaymath}
where $\mathbb{H}_n$ is the Galois group $Aut_\sigma(F_nC_L/kC_L)$ for the difference system (\ref{system}). Furthermore, the embedding
\begin{eqnarray}\nonumber
i_n:\mathbb{H}&\plonge&\mathbb{H}_n\\\nonumber
B&\mapsto& (B,\dots,B^{(n)})
\end{eqnarray}
has Zariski dense image.
\end{cor}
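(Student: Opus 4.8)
The plan is to prove the two equivalences and the density statement separately, the common engine being the relation between the action of $g_B$ and right multiplication by the prolongation matrix. Write $i_n(B)$ for the block matrix whose $(j,\ell)$-block is $\binom{j}{\ell}B^{(j-\ell)}$ (so that $i_n$ has the same shape as $\mathcal{A}$ and $\mathcal{X}$). Since $g_B(X^{(r)})=(XB)^{(r)}=\sum_i\binom{r}{i}X^{(i)}B^{(r-i)}$, a Leibniz computation using $\binom{j}{i}\binom{i}{\ell}=\binom{j}{\ell}\binom{j-\ell}{i-\ell}$ shows $g_B(\mathcal{X})=\mathcal{X}\,i_n(B)$; hence whenever $g_B$ restricts to a $\sigma$-automorphism of $F_nC_L$, the matrix $\mathcal{X}^{-1}g_B(\mathcal{X})$ is exactly $i_n(B)$, so $g_B|_{F_nC_L}$ corresponds to $i_n(B)\in\mathbb{H}_n$. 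For the forward direction of the first equivalence, if $B\in\mathbb{H}$ then $g_B$ is a $kC_L$-$\sd$-automorphism of $kC_L\langle X\rangle$; as $g_B(X^{(j)})\in F_nC_L$ for $j\le n$ and $g_B(\det(X)^{-1})=\det(X)^{-1}\det(B)^{-1}\in F_nC_L$, with the same for $g_B^{-1}=g_{B^{-1}}$, the map $g_B$ restricts to an element of $\mathbb{H}_n=Aut_\sigma(F_nC_L/kC_L)$, namely $i_n(B)$, for every $n$. Conversely, if $i_n(B)\in\mathbb{H}_n$ for all $n$, the corresponding $\sigma$-automorphisms $\tau_n$ of $F_nC_L$ agree on generators with $g_B$, hence are compatible under $F_nC_L\subset F_{n+1}C_L$, and glue to a $\sigma$-automorphism of $\bigcup_nF_nC_L=kC_L\{X,\det(X)^{-1}\}$ that commutes with $\delta$ by the Leibniz shape of the formulas; extending to $kC_L\langle X\rangle$ gives $g_B$, so $B\in\mathbb{H}$. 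The same restriction argument shows that the sets $S_n:=\{B\in GL_m(C_L):\ i_n(B)\in\mathbb{H}_n\}$ form a descending chain $S_0\supseteq S_1\supseteq\cdots$ with $\mathbb{H}=\bigcap_nS_n$.

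The second equivalence is then a Noetherianity statement. Each $S_n$ is Kolchin closed in $GL_m$: the prolongation $B\mapsto i_n(B)$ is a $\delta$-polynomial map over the $\delta$-field $kC_L$ and $\mathbb{H}_n$ is Zariski closed, so $S_n=i_n^{-1}(\mathbb{H}_n)$ is cut out by $\delta$-polynomial equations. Since the Kolchin topology on $GL_m$ over $kC_L$ is Noetherian (Ritt--Raudenbush), the descending chain $(S_n)$ stabilises at some $n_0$, and being decreasing it satisfies $\bigcap_{n\le n_0}S_n=S_{n_0}=\bigcap_nS_n$; this is precisely the asserted equivalence between the condition for all $n$ and the condition for $n\in\{0,\dots,n_0\}$.

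For the density, fix $n$ and let $\mathcal{G}=\overline{i_n(\mathbb{H})}$ be the Zariski closure in $\mathbb{H}_n$. For any $z\in F_nC_L$ the stabiliser $\{\mathcal{C}\in\mathbb{H}_n:\ \mathcal{C}\cdot z=z\}$ is Zariski closed, so it contains $i_n(\mathbb{H})$ iff it contains $\mathcal{G}$; hence $i_n(\mathbb{H})$ and $\mathcal{G}$ have the same fixed ring in $F_nC_L$. Since $F_nC_L\subseteq R:=kC_L\{X,\det(X)^{-1}\}$, an element fixed by every $g_B$ ($B\in\mathbb{H}$) lies in the fixed field of $\mathbb{H}$, so $(F_nC_L)^{\mathcal{G}}=(F_nC_L)^{i_n(\mathbb{H})}=F_nC_L\cap(kC_L\langle X\rangle)^{\mathbb{H}}$. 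Granting that the fixed field of $\mathbb{H}$ is the base, $(kC_L\langle X\rangle)^{\mathbb{H}}=kC_L$, this intersection equals $kC_L=(F_nC_L)^{\mathbb{H}_n}$, and the difference Galois correspondence for the Picard--Vessiot extension $F_nC_L/kC_L$ forces $\mathcal{G}=\mathbb{H}_n$, i.e. $i_n(\mathbb{H})$ is Zariski dense.

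The main obstacle is exactly the fixed-field identity $(kC_L\langle X\rangle)^{\mathbb{H}}=kC_L$. Because $C_L$ is assumed only relatively algebraically closed in $L$ rather than $\delta$-closed, this is not the classical differential Galois correspondence and must be derived from the simplicity of $R$ established in Proposition \ref{espicardvessiot}, together with the Picard--Vessiot machinery over such constants developed in \cite{CHS} (where the analogous statement at each finite level $F_n/kC_{K_n}$ is available). Everything else --- the block identity $g_B(\mathcal{X})=\mathcal{X}\,i_n(B)$, the gluing of the $\tau_n$, and the Noetherian stabilisation of the chain $(S_n)$ --- is formal once that input is in place.
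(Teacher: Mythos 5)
Your proof of the two displayed equivalences is correct and is essentially the paper's own argument in different clothing: where the paper says that $g_B$ leaves $I_\delta$ invariant iff each prolongation $(B,\dots,B^{(n)})$ leaves $I_n$ invariant, you say that $g_B$ restricts to, and glues from, $\sigma$-automorphisms of the $F_nC_L$ via the identity $g_B(\mathcal{X})=\mathcal{X}\,i_n(B)$; and your stabilising chain $(S_n)$ is exactly the paper's ``easy consequence of noetherianity of the Kolchin topology''.

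The density assertion is where there is a genuine gap, and it is larger than the one you flag. Your argument needs two inputs: (i) the fixed-field identity $(kC_L\langle X\rangle)^{\mathbb{H}}=kC_L$, which you explicitly leave unproven, and (ii) the implication that a Zariski closed subgroup $\mathcal{G}\le\mathbb{H}_n$ with $(F_nC_L)^{\mathcal{G}}=kC_L$ must equal $\mathbb{H}_n$, which you invoke as ``the difference Galois correspondence for $F_nC_L/kC_L$''. Read, as you read them, at the level of $C_L$-rational points, both statements are precisely of the kind that can fail when the constants are only relatively algebraically closed rather than algebraically closed: a Galois group can be a nontrivial group of type $\mu_l$ with $\mu_l(C_L)$ trivial (think of $\mu_3$ over $\mathbb{R}$ in the differential picture), in which case the automorphism group has too few elements, its fixed field is strictly larger than the base, and it is certainly not Zariski dense. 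The van der Put--Singer correspondence is proved under the hypothesis of algebraically closed constants, and removing that hypothesis is the very point of this paper and of \cite{CHS}; so ``everything else is formal once that input is in place'' understates the problem --- even granting (i), step (ii) is not available off the shelf in this setting.

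The paper's own proof never passes through fixed fields. By the preceding proposition, $\mathbb{H}$ \emph{is}, as a differential algebraic group over $kC_L$, the stabiliser of the differential locus of $X$, and by the results quoted from \cite{CHS} each $\mathbb{H}_n$ is the stabiliser of the algebraic locus of $(X,\dots,X^{(n)})$; the density of $i_n(\mathbb{H})$ in $\mathbb{H}_n$ is then extracted from the relation between these loci ($I_n=I_\delta\cap kC_L[Y_0,\dots,Y_n]$, so the prolonged differential locus is Zariski dense in the algebraic one) together with the finite-level facts imported from \cite{CHS}, not deduced from a Galois correspondence over $kC_L$. If you want to keep your route, you must either actually prove (i) and (ii) in this setting --- which amounts to redoing the machinery of Section 4 of \cite{CHS} --- or interpret $\mathbb{H}$ and $\mathbb{H}_n$ as group functors and base-change the constants to a differentially closed field, where the Hardouin--Singer correspondence does apply; the paper itself only makes that move later, in Theorem \ref{decdirectsum} and Corollary \ref{corfinal}. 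As written, your density argument assumes what is at stake.
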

\begin{proof}(c.f. Theorem 2.9 and Proposition 4.15 on \cite{CHS})
View now $kC_L\langle X\rangle$ and $kC_L\{Y\}$ as rings. The action of $B\in GL_m(C_L)$ on $X^{(n)}$ and $Y^{(n)}$ is now defined by:
$$
X^{(n)}\mapsto \sum_{j=0}^n\binom{n}{j} X^{(n-j)}B^{(j)}\qquad Y^{(n)}\mapsto \sum_{j=0}^n\binom{n}{j} Y^{(n-j)}B^{(j)}
$$
that is, via the action of $(B,\dots,B^{(n)})$. It follows then that $B$ leaves the differential ideal $I_\delta$ invariant if and only if $\pt n\in\mathbb{N}$ $(B,\dots,B^{(n)})$ leaves the ideal $I_n $ invariant (where $I_n\subset kC_L[Y_0,\dots,Y_n]$ is the ideal of polynomials vanishing at $(X,\dots,X^{(n)})^T$) if and only if for each $n$ $(B,\dots,B^{(n)})$ leaves the ideal of $kC_L$-polynomials vanishing at a fundamental solution of (\ref{system}) invariant, if and only if for each $n$ 
\begin{displaymath}
\left(\begin{array}{cccc}
                                                     B&0&\ldots&0\\
                                                     B'&B&\ldots&0\\
                                                     \vdots&\vdots&\ddots&\vdots\\
                                                     B^{(n)}&\cdots & &B
\end{array}\right)\in\mathbb{H}_n
\end{displaymath}
This proves the first equivalence, the second being an easy consequence of noetherianity of the Kolchin topology.
\\The last assertion follows from the definitions of $\mathbb{H}$ and $\mathbb{H}_n$ (as being respectively the stabiliser of the differential and algebraic loci of a fundamental solution to the given equation), and the preceeding arguments.
\end{proof}

\begin{lm}\label{esfinito}
Let $L,D_L,C_L$ be as defined in \ref{espicardvessiot}. Then $(kC_L)^{a}\cap L/kC_L$ is a finite extension; in particular $[D_L:C_L]=l<\infty$
\end{lm}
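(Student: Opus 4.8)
The plan is to reduce the statement to a uniform finiteness at each finite level $K_n$, where the extension $L$ becomes finitely generated, and then to control the dependence on $n$ through the prolongation structure of the groups $\mathbb{H}_n$ of Corollary \ref{intermediategroups}.

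First I would record that $M:=(kC_L)^a\cap L$ is a $\sigma\delta$-subfield of $L$ containing $kC_L$: it is $\delta$-stable because in characteristic $0$ the derivative of an element algebraic over the $\delta$-field $kC_L$ again lies in $kC_L(\alpha)$, and it is $\sigma$-stable because $\sigma$ fixes $C_L$ pointwise and stabilises $k$, hence stabilises $kC_L$ and its relative algebraic closure. Writing $L=\bigcup_n K_n$ and $C_L=\bigcup_n C_{K_n}$, a cofinality argument on the finitely many coefficients of a minimal polynomial shows that every element of $M$ already lies in some $M_n:=(kC_{K_n})^a\cap K_n$; thus $M=\bigcup_n M_n$ with the $M_n$ increasing, and $[M:kC_L]=\sup_n[M_n\cdot kC_L:kC_L]\le\sup_n[M_n:kC_{K_n}]$. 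So it suffices to bound $e_n:=[M_n:kC_{K_n}]$ uniformly in $n$.

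Each $e_n$ is finite for a soft reason: $K_n=k(X,\dots,X^{(n)})$ is a finitely generated field extension of $kC_{K_n}$, and the relative algebraic closure of a field inside a finitely generated extension is always finite. To make $e_n$ computable I would then invoke the difference Picard--Vessiot theory: by Proposition \ref{espicardvessiot} (via Corollary 4.12 of \cite{CHS}), $F_n/kC_{K_n}$ is a Picard--Vessiot extension, its Galois group $G_n$ is a linear algebraic group, and the Galois correspondence identifies $M_n$ with the fixed field $K_n^{G_n^0}$ of the identity component; hence $e_n=|\pi_0(G_n)|=[D_{K_n}:C_{K_n}]$. Since $D_L=\bigcup_n D_{K_n}$ one gets $[D_L:C_L]\le\sup_n e_n$, so \emph{both} assertions of the lemma reduce to a single uniform bound on $e_n$.

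The heart of the argument, and the step I expect to be the main obstacle, is that uniform bound. I would pass to the groups $\mathbb{H}_n=Aut_\sigma(F_nC_L/kC_L)$, which all have the common base $kC_L$ and whose number of geometric components equals $e_n$ (the geometric component count is unchanged under the extension of constant fields $C_{K_n}\subseteq C_L$ in characteristic $0$). By Corollary \ref{intermediategroups} a general element of $\mathbb{H}_n$ is a block lower--triangular Toeplitz matrix with diagonal block $B$, so restriction to $F_0C_L$ defines a surjection $p_n:\mathbb{H}_n\to\mathbb{H}_0$ sending such a matrix to $B$, whose kernel consists of the block--Toeplitz matrices with identity diagonal block. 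This kernel is unipotent, hence connected, so $p_n$ induces an isomorphism $\pi_0(\mathbb{H}_n)\cong\pi_0(\mathbb{H}_0)$, and $e_n=|\pi_0(\mathbb{H}_0)|$ is independent of $n$ and finite, $\mathbb{H}_0$ being a linear algebraic group. The delicate points to verify carefully are the surjectivity of $p_n$ and the precise Toeplitz shape of the general element of $\mathbb{H}_n$, so that the kernel really is the claimed unipotent (connected) group, and, for the \emph{in particular}, the linear disjointness of $D_{K_n}$ and $k$ over $C_k$ --- guaranteed by the hypothesis $C_k=D_k$ --- which is what converts the bound on $[D_{K_n}:C_{K_n}]$ into the stated bound on $[D_L:C_L]$.
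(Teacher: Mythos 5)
Your proposal takes a genuinely different route from the paper, and its central uniformity mechanism is a reasonable idea, but as written it contains genuine gaps, one of which is a statement that is false in general. For contrast, the paper's proof never descends to the finite levels $K_n$ at all: it works directly at the differential level, where the uniformity in $n$ that you rightly call the heart of the matter comes for free from noetherianity of the Kolchin topology. Namely, $I_\delta=\mathcal{I}_\delta(X/kC_L)$ is a prime $\delta$-ideal, so $W=\mathcal{V}_\delta(I_\delta)$ is Kolchin closed and (Ritt--Raudenbush) has finitely many absolutely irreducible components, i.e.\ finitely many minimal $\delta$-ideals of $(kC_L)^a\{Y\}$ contain $I_\delta$; these are in bijection with the $kC_L$-$\delta$-embeddings of $M=(kC_L)^a\cap L$ into $(kC_L)^a$, whence $[M:kC_L]<\infty$. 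That is the entire proof. Your replacement of this soft argument --- namely $e_n=|\pi_0(\mathbb{H}_n)|$ together with $\pi_0(\mathbb{H}_n)\cong\pi_0(\mathbb{H}_0)$ via a surjection with unipotent kernel --- is structurally viable: a surjection of algebraic groups with connected kernel does induce an isomorphism of component groups, unipotent groups in characteristic $0$ are connected, and every element of $\mathbb{H}_n$ is indeed block lower-triangular Toeplitz. Note, however, that the Toeplitz shape does \emph{not} come from Corollary \ref{intermediategroups}, which asserts only that the Toeplitz-with-derivative matrices $i_n(\mathbb{H})$ are \emph{dense} in $\mathbb{H}_n$ (if all of $\mathbb{H}_n$ had that form the corollary would be vacuous); it follows instead from the observation that the variety of $I_n$ lies in the linear space of Toeplitz matrices and contains the invertible Toeplitz point $\mathcal{X}$, so any stabilising $\mathcal{B}=\mathcal{X}^{-1}(\mathcal{X}\mathcal{B})$ is a product of Toeplitz matrices, hence Toeplitz.

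The genuine gaps are these. First, the load-bearing identification $e_n=[M_n:kC_{K_n}]=|\pi_0(G_n)|$, i.e.\ that $M_n$ is the fixed field of $G_n^0$, is invoked as ``the Galois correspondence'', but no citable statement covers the situation you are in: the constants $C_{K_n}$ are not algebraically closed (the hypothesis $C_k=D_k$ concerns only the base $k$; nothing forces $D_{K_n}=C_{K_n}$), whereas the correspondence of \cite{SVDP}, and the standard ``fixed field of the identity component equals the relative algebraic closure'' theorem, are proved over algebraically closed constants. To use them you must base change to $C_{K_n}^a$ or $\hat{C}$ (where $F_n$ is no longer a domain), justify that base change by linear disjointness, and then descend the relative algebraic closure; the same applies to the surjectivity of $p_n$ (extension of automorphisms from a sub-Picard-Vessiot ring) and to the identification of $\mathbb{H}_n$ with $G_n\times_{C_{K_n}}C_L$. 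All of this is fillable, but it is exactly the kind of work the paper reserves for Theorem \ref{decdirectsum} and Corollary \ref{corfinal}, so your proof is far heavier than the one it replaces. Second, the asserted equality $|\pi_0(G_n)|=[D_{K_n}:C_{K_n}]$ is false in general difference Galois theory: over $k=\mathbb{C}(z)$ with $\sigma(z)=qz$ ($q$ not a root of unity, so $C_k=D_k=\mathbb{C}$), the equation $\sigma(y)=\sqrt{q}\,y$ has Picard--Vessiot ring $\mathbb{C}(\sqrt{z})$, a field with $D=C=\mathbb{C}$, yet the Galois group is $\mu_2$; the relative algebraic closure of the base field in a Picard--Vessiot extension need not be generated by constants. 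This slip does not sink the first assertion of the lemma (only $e_n=|\pi_0(G_n)|$ is needed there), and for the ``in particular'' the linear disjointness of $D_L$ and $k$ over $C_k$ that you mention at the end is the correct substitute, but the chain of equalities as you wrote it cannot stand.
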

\begin{proof}
Consider $I=\mc{I}_{\delta}(X/kC_L)$, the ideal of differential polynomials over $kC_L$ that vanish at $X$. It is a prime ideal of $kC_L\{Y\}$, and so the differential set it defines (say $W=\mc{V}_\delta(I)$) is a Kolchin closed set; since the Kolchin topology is noetherian, $W$ has a finite number of absolutely irreducible components; equivalently, there is only a finite number of minimal ideals of $(kC_L)^a\{Y\}$ containing $I$. Since there is a one-to-one correspondence between such minimal ideals and $kC_L$-$\delta$-embeddings of $(kC_L)^{a}\cap L$ into $(kC_L)^{a}$, the result follows.
\end{proof}
\begin{thm}\label{decdirectsum}
Let $\hat{C}$ denote the $\sd$-field whose underlying $\delta$-field is a differential closure of $C_L$ (taken to be linearly disjoint from $L$ over $D_L$) on which the automorphism $\sigma$ acts as the identity. Then:
\benum
\item $S=R\otimes_{kC_L} k\hat{C}$ is the unique Picard-Vessiot extension over the $\sd$-field $k\hat{C}$ for the system of equations $\sigma(X)=AX$.
\item $S\cong\oplus_{i=0}^{l-1} S_i$ where $l=[D_L:C_L]$, $\sigma(S_i)=S_{i+1}$, and each $S_i$ is simple as a $\sigma^l\delta$-domain. The $\sigma^l$-ring $S_0$ is equal to $R[k\hat{C}]\cong R\otimes_{kD_L}k\hat{C}$, where $\sigma^l$ is the identity on $\hat{C}$.
\eenum
 \end{thm}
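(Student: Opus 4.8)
The plan is to derive both assertions from one structural fact about $D_L$, combined with a base-change argument for $\sigma$-simplicity. First I would record that $\sigma$ restricts to an automorphism of $D_L=C_L^a\cap L$ fixing $C_L$, and that $C_L$ is precisely the fixed field $D_L^\sigma$ (since $C_L\subseteq D_L\subseteq L$ forces $D_L^\sigma=D_L\cap C_L=C_L$). As $[D_L:C_L]=l<\infty$ by Lemma \ref{esfinito}, Artin's theorem then shows $D_L/C_L$ is a cyclic Galois extension of degree $l$ with $\mathrm{Gal}(D_L/C_L)=\langle\sigma|_{D_L}\rangle$. This cyclicity is the engine behind the whole decomposition and the count $l$.

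For statement (1) I would first check that $S=R\otimes_{kC_L}k\hat C$ is $\sd$-generated over $k\hat C$ by the fundamental solution $X,\det(X)^{-1}$, and that its $\sigma$-constants are exactly $\hat C$: since $R^\sigma=C_L$ (Proposition \ref{espicardvessiot}) and $k\hat C$ is flat over the field $kC_L$, the standard computation of the constants of a base change, together with the linear disjointness of $\hat C$ and $L$ over $D_L$, gives $S^\sigma=\hat C$. The crux is the $\sigma$-simplicity of $S$. Here I would invoke the base-change mechanism of \cite{CHS} underlying Corollary 4.12: $R$ is $\sigma$-simple with constants $C_L$, and $\hat C$ is a differential closure of $C_L$, so no nonzero proper $\sigma$-ideal can survive. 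The shape of the argument is that a maximal $\sigma$-ideal $J$ produces a $\sigma$-simple quotient whose $\sigma$-constants must remain $\hat C$ by differential-closedness; $\sigma$-simplicity of $R$ makes $R\to S/J$ injective, and a dimension count then forces $J=0$. Granting this, $S$ is a $\sigma$-simple (hence $\sd$-simple) $\sd$-ring, $\sd$-generated over $k\hat C$ by a fundamental solution and with constant field $\hat C$; it is therefore a Picard-Vessiot extension, and uniqueness over the differentially closed $\hat C$ (6.16 in \cite{HS}) identifies it as the unique one.

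For statement (2) I would manufacture the idempotents from the cyclic extension. Since $R$ is a domain with $\mathrm{frac}(R)=L$, both $D_L\subseteq L$ and $D_L\subseteq\hat C$ embed into $qr(S)$; the $l$ distinct $C_L$-embeddings of $D_L$ into $\hat C$ split $D_L\otimes_{C_L}\hat C\cong\hat C^l$, yielding $l$ orthogonal idempotents $e_0,\dots,e_{l-1}$ summing to $1$. Because $S$ is $\sigma$-simple and Noetherian it is reduced and decomposes as the direct sum of domains indexed by its minimal primes, so these idempotents lie in $S$ and $S\cong\bigoplus_{i=0}^{l-1}S_i$ with $S_i=e_iS$. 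The action $\sigma\otimes\mathrm{id}$ permutes the idempotents by $\tau\mapsto\tau\circ\sigma|_{D_L}$; as $\mathrm{Gal}(D_L/C_L)$ is cyclic generated by $\sigma|_{D_L}$, this is a single $l$-cycle, so $\sigma(S_i)=S_{i+1}$. Hence $\sigma^l$ stabilises each $S_i$, and since $\sd$-ideals of $S$ correspond to $\sigma^l\delta$-ideals of $S_0$ (via $J_0\mapsto\bigoplus_i\sigma^i(J_0)$), the $\sd$-simplicity of $S$ forces each $S_i$ to be $\sigma^l\delta$-simple. Finally, the component attached to the chosen embedding $D_L\plonge\hat C$ is the subring $R[k\hat C]$ computed inside $L\otimes_{C_L}\hat C$, and the linear disjointness of $\hat C$ and $L$ over $D_L$ makes $R\otimes_{kD_L}k\hat C\to R[k\hat C]$ an isomorphism onto the domain $S_0$.

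The main obstacle is the $\sigma$-simplicity of $S$ in step (1): this is exactly where differential-closedness of $\hat C$ is indispensable, and where the failure of $S$ to be a domain — the very idempotents produced in (2) — must be tolerated, so it cannot be reduced to the classical base change over algebraically closed constants and genuinely needs the $\cite{CHS}$ machinery. A secondary subtlety I would be careful about is that $D_L$ need not lie in $R$, so the idempotents and the identification of $S_0$ must be carried out inside $qr(S)$ rather than in $S$ itself; once $S$ is known to split as a finite product of domains, the isomorphism $S_0\cong R\otimes_{kD_L}k\hat C$ and the cyclic $\sigma$-action follow routinely from the Galois-theoretic picture.
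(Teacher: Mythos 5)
The genuine gap is in your part (1). You correctly identify the $\sigma$-simplicity of $S$ as the crux, but you then mischaracterise what it rests on and offer a sketch that does not work. The paper obtains simplicity precisely by the ``classical base change'' you dismiss: Lemma 1.11 of \cite{SVDP} (the constants base-change lemma) says that a simple difference ring, finitely generated over a difference field and with constants $C_L$, remains simple after tensoring over $C_L$ with \emph{any} field extension carrying the trivial automorphism. Applied to $R$ and $\hat C$ this gives $\sigma$-simplicity (hence $\sd$-simplicity) of $R\otimes_{C_L}\hat C$, and localising at the nonzero elements of the domain $k\otimes_{C_L}\hat C$ preserves it; no differential (nor even algebraic) closedness of $\hat C$ is used at this stage. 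Differential closedness enters only afterwards, through Proposition 6.16 of \cite{HS}, to get \emph{uniqueness} of the Picard-Vessiot extension --- exactly the opposite of your claim that closedness is ``indispensable'' for simplicity. Moreover your sketch (maximal $\sigma$-ideal $J$, constants of $S/J$ equal $\hat C$, $R\hookrightarrow S/J$, ``a dimension count then forces $J=0$'') is not a proof: it is unclear which dimension is being counted, since $S$ is only $\delta$-finitely generated over $k\hat C$ (its transcendence degree and Krull dimension are in general infinite), and an argument of the shape ``a surjection between rings of equal dimension is injective'' is exactly the kind of argument that fails for non-domains. Indeed $S$ is not a domain when $l>1$: the projection $S\to S_0$ from part (2) is a non-injective surjection, even though no dimension-like invariant distinguishes $S$ from $S_0$. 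So the step you yourself flag as the main obstacle is left unproved; it is repaired by invoking \cite{SVDP} Lemma 1.11, which is the paper's route.

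Your part (2) is essentially correct and close in spirit to the paper's: both arguments rest on $D_L/C_L$ being cyclic Galois of degree $l$ generated by $\sigma$, on a count of idempotents, and on showing that $R\otimes_{kD_L}k\hat C$ is a domain. Two corrections, though. First, the chain ``$\sigma$-simple and Noetherian $\Rightarrow$ reduced $\Rightarrow$ direct sum of domains indexed by the minimal primes'' is not valid as stated: a reduced Noetherian ring such as $k[x,y]/(xy)$ does not split. The splitting, and the fact that the idempotents of $qr(S)$ already lie in $S$, come from $\sigma$-simplicity via a conductor argument; this is exactly Lemma 6.8 of \cite{HS}, which the paper cites and which you should cite rather than rederive loosely. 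Second, your worry that ``$D_L$ need not lie in $R$'' is unfounded, and avoiding it is what makes the paper's count cleaner: by Corollary 4.12 and Lemma 4.6 of \cite{CHS}, $R$ is $\sigma^l$-simple with $Fix(\sigma^l)(R)=D_L$, so $D_L\subset R$, the minimal polynomial $p$ of a primitive element of $D_L/C_L$ splits over $R$, and $A=R\otimes_{kC_L}kD_L'\cong R[y]/(p(y))\cong R^l$ exhibits the $l$ idempotents directly inside $S$. Finally, the paper proves that $S_0$ is a domain in two steps (base change to the algebraic closure $C_0$ of $C_L$ inside $\hat C$, then regularity of $k\hat C/kC_0$ via Lemma 6.11 of \cite{HS}); that two-step argument is the actual content hiding behind your one-line appeal to linear disjointness, and you would need to supply it to make the identification $S_0\cong R\otimes_{kD_L}k\hat C$ complete.
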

\begin{proof}
$(1)$ By Lemma 1.11 in \cite{SVDP}
, it follows that $S$ is a simple $\sd$-ring. Indeed, $R\otimes_{C_L}\hat{C}$ is $\sigma$-simple by Lemma 1.11 in \cite{SVDP}
hence it is also $\sd$-simple. Since $k\otimes_{C_L}\hat{C}$ is a domain by the hypothesis on $\hat{C}$, we can localise without modifying the $\sigma$-simplicity. By Proposition 6.16 in \cite{HS}
, we then have that $S\cong (R\otimes_{C_L}\hat{C})_{k\otimes_{C_L}\hat{C}}$.
\\\\$(2)$ All assertions follow from Lemma 6.8 in \cite{HS} 
, except for $l=[D_L:C_L]$. Let $D_L=C_L[\alpha]$, and let $p(y)$ be the minimal polynomial of $\alpha$ over $C_L$. Since $D_L$ and $k$ are linearly disjoint over $D_k=C_k$, this polynomial remains irreducible over $kC_L$. Thus, if $D_L'\subset C_0$, where $C_0$ is the algebraic closure of $C_L$ inside $\hat{C}$ and $D_L'$ is the subfield of $C_0$ isomorphic to $D_L$:
\[
A=R\otimes_{kC_L}kD_L'\cong R[y]/\left(p(y)\right)
\]
By Corollary 4.12 in \cite{CHS}
, $R$ is $\sigma^l$-simple, and $D_L=Fix(\sigma^l)(R^*)=Fix(\sigma^l)(R)$ by Lemma 4.6 in \cite{CHS}
, so $p(y)$ splits completely over $R$, which implies that the ring  $A$ is isomorphic to the direct sum of $l$ copies of $R$, that is, $A$ has $l$ primitive idempotents, $e_0,\dots, e_{l-1}$. Now, inside $A$ there are two ``distinct copies'' of the field $D_L$, namely $D_L\otimes 1$ and $1\otimes D_L'$; as $\sigma^l$-fields, they are isomorphic. Moreover, for each $i$, $e_i(D_L\otimes 1)=e_i(1\otimes D_L')$. This follows from the fact that $e_iA$ is a domain, and $D_L/C_L$ is Galois.\\
So
\[
R\otimes_{kC_L}k\hat{C}\cong\oplus_{i=0}^{l-1}e_i A\otimes_{kD_L}k\hat{C}
\]
All there is left to do is to check that $A\otimes_{kD_L}k\hat{C}$ is a domain, for then Lemma 6.8 in \cite{HS} 
 implies that the $\sd$-structure  induced by the previous isomorphism is the right one. Now, since $kC_0=kD_L'[C_0]$ ($C_0/D_L'$ is algebraic), then $R\otimes_{kD_L}kC_0\cong R\otimes_{D_L}C_0$ is a domain. In the same fashion, since $k\hat{C}/kC_0$ is regular (by Lemma 6.11 in \cite{HS} 
  and (1)), then $\left(k\otimes_{D_L}C_0\right)\otimes_{kC_0}k\hat{C}$ is also a domain.
\end{proof}

\begin{cor}\label{corfinal}
Let $k,\ L,\ R,\ \hat{C}$ be as above, and suppose that $C_k$ is differentially closed. Let $P$ be a Picard-Vessiot extension over $k$ for the system of equations in \ref{espicardvessiot}, and let $\mc{G}$ denote the differential group defined in Proposition 6.18 in \cite{HS} 
 (so that we have that $Aut_{\sd}(P^*/k)=\mc{G}(C_k)$). Then $\mc{G}$ and $\mathbb{H}$ are isomorphic over $\hat{C}$.
\end{cor}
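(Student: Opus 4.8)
The plan is to use \emph{uniqueness of the Picard--Vessiot extension over the base field $k\hat C$}, whose field of $\sigma$-constants $\hat C$ is differentially closed, as the bridge between the two groups. Since $C_k$ is differentially closed, the theory of \cite{HS} applies directly over $k$, so $P$ exists, is unique, and $Aut_{\sd}(P^*/k)=\mc G(C_k)$. The first move is to pass from the base $k$ to $k\hat C$. Because $\hat C\supseteq C_L\supseteq C_k$ is differentially closed and $A\in GL_m(k)\subseteq GL_m(k\hat C)$, the hypotheses of \cite{HS} also hold over $k\hat C$; hence there is a unique Picard--Vessiot extension for $\sigma(X)=AX$ over $k\hat C$, and its Galois group is the base change $\mc G_{\hat C}=\mc G\times_{C_k}\hat C$. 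This base-change compatibility is built into the way $\mc G$ is defined in Proposition 6.18 of \cite{HS}, via the ideal cutting out the torsor of a fundamental solution, and I would simply cite it.

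Next I would identify this unique extension with the author's $S$. By Theorem \ref{decdirectsum}(1), $S=R\otimes_{kC_L}k\hat C$ is itself a Picard--Vessiot extension over $k\hat C$; by the uniqueness just invoked it is isomorphic, as a $\sd$-$k\hat C$-algebra, to the extension obtained from $P$ by base change. Since the Galois group functor is determined by the Picard--Vessiot ring, this yields an isomorphism of differential algebraic groups $Aut_{\sd}(S^*/k\hat C)\cong \mc G_{\hat C}$ (and, on $\hat C$-points, $\cong\mc G(\hat C)$).

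It then remains to compute $Aut_{\sd}(S^*/k\hat C)$ in terms of $\mathbb H$. Because $\hat C$ is differentially closed, the Picard--Vessiot extension $S$ adds no new $\sigma$-constants, so $C_{S^*}=\hat C$; consequently every $g\in Aut_{\sd}(S^*/k\hat C)$ carries the image of the generic solution $X$ to $XB$ for a unique $B\in GL_m(\hat C)$, and $g\mapsto B$ is injective. As in the Proposition preceding Corollary \ref{intermediategroups}, such a $g$ exists exactly when $g_B$ leaves $I_\delta$ invariant, i.e. when $B\in\mathbb H(\hat C)$; this is precisely the base change to $\hat C$ of the condition defining $\mathbb H$ over $C_L$. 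Hence $Aut_{\sd}(S^*/k\hat C)=\mathbb H(\hat C)$ and, functorially, this group is $\mathbb H_{\hat C}=\mathbb H\times_{C_L}\hat C$. Combining the two computations of the same Galois group gives $\mathbb H_{\hat C}\cong\mc G_{\hat C}$, which is the assertion that $\mc G$ and $\mathbb H$ are isomorphic over $\hat C$.

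The main obstacle is the base-change bookkeeping of the two groups rather than any single hard estimate. One must verify that the field of $\sigma$-constants of $S$ is exactly $\hat C$ (so that automorphisms are genuinely parametrized by matrices over $\hat C$, despite $S$ being only a direct sum of domains and $S^*$ a product of fields), that the condition ``$g_B$ leaves $I_\delta$ invariant'' is stable under the extension of constants from $C_L$ to $\hat C$, and that the group $\mc G$ of \cite{HS} really does commute with the base change from $k$ to $k\hat C$. Once these compatibilities are in place, uniqueness of the Picard--Vessiot extension over the differentially closed base $k\hat C$ does all the work.
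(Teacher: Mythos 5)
Your overall strategy coincides with the paper's: use uniqueness of the Picard--Vessiot extension over $k\hat{C}$, whose $\sigma$-constants are differentially closed, as the bridge, identify $S=R\otimes_{kC_L}k\hat{C}$ with the base change of $P$, and then compute the same automorphism group once as $\mc{G}(\hat{C})$ and once as $\mathbb{H}(\hat{C})$. The problem is that your proposal defers exactly the two verifications where the paper does its real work, and at least one of them is not ``base-change bookkeeping'' but the heart of the proof. First, to invoke uniqueness at all you need the base change of $P$ to \emph{be} a Picard--Vessiot extension of $k\hat{C}$; the paper establishes this by quoting Corollary 6.22 of \cite{HS} ($P$ is a simple $\sigma$-ring), Lemma 1.11 of \cite{SVDP} (so $P\otimes_{C_k}\hat{C}$ is $\sigma$-simple, hence $\sd$-simple), and the linear disjointness of $k$ and $\hat{C}$ over $C_k$ in order to localise at $k\otimes_{C_k}\hat{C}$. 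Your statement that the compatibility of $\mc{G}$ with base change is ``built into'' Proposition 6.18 of \cite{HS} and can simply be cited presupposes this simplicity argument; the paper then gets $Aut_{\sd}(\tilde{P}^*/k\hat{C})=\mc{G}(\hat{C})$ from the discussion following Proposition 6.18 together with the identification $Aut_{\sd}(\tilde{P}^*/k\hat{C})=Aut_{\sd}(P\otimes_{C_k}\hat{C}/k\otimes_{C_k}\hat{C})$.

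Second, and more seriously, the claim $Aut_{\sd}(S^*/k\hat{C})=\mathbb{H}(\hat{C})$ is \emph{not} automatically ``the base change to $\hat{C}$ of the condition defining $\mathbb{H}$ over $C_L$''. The automorphisms of $S^*$ over $k\hat{C}$ correspond to matrices $B$ whose $g_B$ stabilises $I_\delta(X/k\hat{C})$, and a priori this ideal could be strictly larger than the extension of $I_\delta(X/kC_L)$ to $k\hat{C}\{Y\}$, in which case its stabiliser could be strictly smaller than $\mathbb{H}(\hat{C})$. One must prove that $I_\delta(X/k\hat{C})$ is still defined over $kC_L$. The paper does this by reducing the differential locus of $X$ over $k\hat{C}$ to the sequence of algebraic loci $W_n=\mc{V}(X,\dots,X^{(n)}/k\hat{C})$ and then invoking the linear disjointness of $kC_L$ and $\hat{C}$ over $C_L$ (Lemma 6.11 of \cite{HS}) to see that each $W_n$ descends to $kC_L$. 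You correctly flag this as something ``one must verify'', but you give no argument for it, and no indication of the descent mechanism (linear disjointness) that makes it true; without it the final identification, and hence the isomorphism $\mc{G}\cong\mathbb{H}$ over $\hat{C}$, is unproven.
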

\begin{proof}
Consider the tensor product:
\[
P\otimes_{C_k}\hat{C}
\]
By Corollary 6.22 in \cite{HS} 
 $P$ is a simple $\sigma$-ring. So  Lemma 1.11 in \cite{SVDP} 
  implies that $P\otimes_{C_k}\hat{C}$ is simple as a $\sigma$-ring, and so also as a $\sd$-ring. Furthermore, since $k$ and $\hat{C}$ are linearly disjoint over $C_k$, $P$ is finitely generated over the domain $k\otimes_{C_k}\hat{C}$, so one may localise to obtain a Picard-Vessiot extension $(P\otimes_{C_k}\hat{C})_{k\otimes \hat{C}}/k\hat{C}$ for the system of equations in \ref{espicardvessiot}. By uniqueness of the Picard-Vessiot extension when the $\sigma$-constants are differentially closed, together with theorem \ref{decdirectsum}, one must have
\[
\tilde{P}=(P\otimes_{C_k}\hat{C})_{k\otimes \hat{C}}\cong(P\otimes_{k\hat{C}}k\hat{C})\cong(R\otimes_{C_L}\hat{C})_{kC_L\otimes \hat{C}}=\tilde{R}
\] 
This isomorphism induces an isomorphism $Aut_{\sd}(\tilde{P}^*/k\hat{C})\cong Aut_{\sd}(\tilde{R}^*/k\hat{C})$ via conjugation. So all there's left to check is that 
\[Aut_{\sd}(\tilde{P}^*/k\hat{C})=\mc{G}(\hat{C})\]\[ Aut_{\sd}(\tilde{R}^*/k\hat{C})=\mathbb{H}(\hat{C})\]
The first follows from the discussion in the paragraph following Proposition 6.18 in \cite{HS} 
 and the fact that:
\[
Aut_{\sd}(\tilde{P}^*/k\hat{C})=Aut_{\sd}(\tilde{P}/k\hat{C})=Aut_{\sd}(P\otimes_{C_k}\hat{C}/k\otimes_{C_k}\hat{C})
\]
For the second, we must show that:
\[
\{B\in GL_m:\ g_B \textrm{ leaves } I_\delta(X/k\hat{C}) \textrm{ invariant}\}=\mathbb{H}(\hat{C})
\]
For that, it suffices to prove that $I_\delta(X/k\hat{C})$ is still defined over $kC_L$. Now note that to show that $Y$ and $X$ define the same differential locus, one may prove that for every $n\in\mathbb{N}$ $(X,\dots,\ X^{(n)})$ and $(Y,\dots,\ Y^{(n)})$ have the same algebraic locus (i.e., the differential locus of $X$ over $k\hat{C}$ is totally defined by the algebraic loci $W_n=\mc{V}(X,\dots,X^{(n)}/k\hat{C})$). But $kC_L$ and $\hat{C}$ are linearly disjoint over $C_L$ by Lemma 6.11 in \cite{HS}
; hence, $W_n$ is still defined over $kC_L\ \pt n\in\mathbb{N}$.

\end{proof}
\section{A closer look at the Galois group}
We investigate in this section the structure of the Galois group defined in the previous sections. It turns out that the analysis done in \cite{SVDP} works also in this case:
\begin{lm}\label{haciaconcom}
Let $k$ be a $\sd$-field such that $C_k=D_k$. Let $R$ be the Picard-Vessiot extension built in \ref{espicardvessiot}, $L$ its fraction field and $C_L$ the constant field of $L$. Consider the $\sd$-field $C$, whose underlying set is $C_L^{a}$ and on which the automorphism acts as the identity, and the derivation extends in the only possible way from $\delta|_{C_L}$. Then:
\benum
\item[i)]$R\otimes_{C_L}C$ is a Picard-Vessiot extension over $kC$ whose total ring of fractions is $L\otimes_{C_L}C$.
\item[ii)] If we set $R\otimes_{C_L}C\cong\oplus_{i=0}^{l-1}R_i$, we have that $R_0$ is Picard-Vessiot over $kC$ for $\sigma^lX=A_lX$ where $A_l=\sigma^{l-1}(A)\sigma^{l-2}(A)\dots\sigma(A)A$; if $R_0^*=qr(R_0)$, then $R_0^*\cong L\otimes_{D_L}C$ as a $\sigma^l\delta$-ring.
\eenum
\end{lm}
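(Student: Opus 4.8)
The plan is to read this as the special case of Theorem~\ref{decdirectsum} in which the differentially closed field $\hat{C}$ is replaced by the much smaller algebraic closure $C=C_L^a$. Almost every step then transfers essentially verbatim; the genuinely new content is the identification of the fraction rings in terms of $L$ and the explicit shape of the $\sigma^l$-equation satisfied by $R_0$. Throughout, $kC$ denotes the $\sd$-field $frac(kC_L\otimes_{C_L}C)$, which makes sense because $C=C_L^a\subseteq\hat{C}$ and hence $kC_L$ and $C$ are linearly disjoint over $C_L$ by Lemma~6.11 in \cite{HS}, so that $kC_L\otimes_{C_L}C$ is a domain.

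For i) I would first record that $R$ is $\sigma$-simple with field of $\sigma$-constants $C_L$ (Proposition~\ref{espicardvessiot}, together with $C_R=C_L$ since $R^*=L$). As $\sigma$ acts as the identity on $C$, Lemma~1.11 in \cite{SVDP} gives that $R\otimes_{C_L}C$ is $\sigma$-simple, hence, $\delta$ commuting with $\sigma$, also $\sd$-simple, and its $\sigma$-constants are exactly $C$. The ring $R\otimes_{C_L}C$ is generated over $kC_L\otimes_{C_L}C$ as a $\delta$-ring by the entries of $X\otimes 1$ and $det(X\otimes1)^{-1}$, so, identifying $kC_L\otimes_{C_L}C$ with the field $kC$ by the harmless localisation (which preserves $\sigma$-simplicity), one obtains a Picard-Vessiot extension over $kC$ for $\sigma(X)=AX$ (by the construction of Proposition~\ref{espicardvessiot}, or Proposition~6.16 in \cite{HS}). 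The identification of the total ring of fractions I postpone, since it drops out of the decomposition in ii).

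For ii) the decomposition $R\otimes_{C_L}C\cong\oplus_{i=0}^{l-1}R_i$ with $\sigma(R_i)=R_{i+1}$ and each $R_i$ a $\sigma^l\delta$-domain follows exactly as in Theorem~\ref{decdirectsum}, via Lemma~6.8 in \cite{HS}: the $l=[D_L:C_L]$ (finite by Lemma~\ref{esfinito}) primitive idempotents already lie in $D_L\otimes_{C_L}C\cong C^l$, using that $D_L\subseteq R$ with $D_L=Fix(\sigma^l)(R)$ (Lemma~4.6 in \cite{CHS}) and that $R$ is $\sigma^l$-simple (Corollary~4.12 in \cite{CHS}). Since $D_L\subseteq R$, associativity of tensor products gives $R\otimes_{C_L}C=R\otimes_{D_L}(D_L\otimes_{C_L}C)\cong(R\otimes_{D_L}C)^l$, so after reindexing $R_0\cong R\otimes_{D_L}C$, a $\sigma^l$-simple domain on which $\sigma^l$ acts as $\sigma^l\otimes\mathrm{id}$ (legitimate, as $\sigma^l$ fixes both $D_L$ and $C$). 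A direct computation from $\sigma(X)=AX$ gives $\sigma^l(X)=A_lX$ with $A_l=\sigma^{l-1}(A)\cdots\sigma(A)A$, so $e_0(X\otimes 1)$ is a fundamental matrix for $\sigma^l(Y)=A_lY$; being $\sigma^l$-simple, generated over $kC$ by this solution, and with constants $C$, the ring $R_0$ is Picard-Vessiot over $kC$ for that equation.

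It remains to identify the fraction fields, and this is where I expect the only real subtlety to lie. Since $D_L$ is by definition relatively algebraically closed in $L$ and the characteristic is $0$, the extension $L/D_L$ is regular, so $L$ is linearly disjoint over $D_L$ from the algebraic extension $C=C_L^a=(D_L)^a$. Writing $C$ as the increasing union of its finite subextensions $E/D_L$, each $L\otimes_{D_L}E$ is the field $LE$ (the minimal polynomial of a generator of $E$ stays irreducible over $L$ because $D_L$ is relatively algebraically closed in $L$), so the direct limit $L\otimes_{D_L}C$ is itself a field. As $R_0\cong R\otimes_{D_L}C$ embeds into the field $L\otimes_{D_L}C$, which is a localisation of $R_0$, we get $R_0^*=frac(R_0)\cong L\otimes_{D_L}C$ as $\sigma^l\delta$-rings. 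Summing over the $l$ factors then yields $qr(R\otimes_{C_L}C)=\oplus_i frac(R_i)\cong(L\otimes_{D_L}C)^l\cong L\otimes_{C_L}C$, which is the assertion on the total ring of fractions in i). The point to watch is precisely this packaging: verifying that $L\otimes_{D_L}C$ is a field and that inverting the image of $R\setminus\{0\}$ already produces the full total quotient ring, which is exactly where the defining property of $D_L$---relative algebraic closedness of $C_L$, and hence of $D_L$, in $L$---is used.
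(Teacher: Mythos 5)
Your proof is correct and follows essentially the same route as the paper: Lemma 1.11 of \cite{SVDP} for $\sigma$- (hence $\sd$-) simplicity of the base change, the idempotent decomposition coming from Theorem \ref{decdirectsum} / Lemma 6.8 of \cite{HS} (with $D_L=Fix(\sigma^l)(R)$ and $\sigma^l$-simplicity of $R$ via Lemma 4.6 and Corollary 4.12 of \cite{CHS}), and the identification of the summands, so the only difference is that you spell out the fraction-ring identifications ($L\otimes_{D_L}C$ is a field by regularity of $L/D_L$, and $qr(R\otimes_{C_L}C)\cong L\otimes_{C_L}C$), which the paper's proof leaves implicit. One small simplification you could make: since $C/C_L$ is algebraic, $kC_L\otimes_{C_L}C$ is a domain integral over the field $kC_L$ and hence already a field, so your ``harmless localisation'' is actually the identity.
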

\begin{proof}
For the first assertion of $i)$, since $R$ is $\sigma$-simple, we may apply Lemma 1.11 in \cite{SVDP} 
 to deduce that $R\otimes_{C_L}C$ is $\sigma$-simple, hence also $\sd$-simple; finally, the fact that $kC\subset R\otimes_{C_L}C$ yields the result.
\\The decomposition of $R\otimes_{C_L}C$ into a direct sum follows from \ref{decdirectsum}. By Lemma 6.8 in \cite{HS} 
 each of the summands is $\sigma^l\delta$-simple and (using the decomposition and the fact that $kC\subset R_i$) isomorphic to $kC\{X,det(X)^{-1}\}$, yielding $ii)$ and the second assertion of $i)$.
\end{proof}

\begin{prop}\label{concom}
Let $R,\ k, C_L$ be as above. Let $C$ be the difference differential field whose underlying differential field is $C_L^a$, and on which $\sigma$ acts as the identity. Consider 
\[
Aut_{\sd}(L\otimes_{C_L}kC/kC)=\mathbb{H}(C)
\]
There is an exact sequence:
\begin{displaymath}
0\to Aut_{\sigma^l\delta}(R_0^*/kC)\xrightarrow{\Gamma} \mathbb{H}(C)\xrightarrow{\Delta} \mathbb{Z}/l\mathbb{Z}\to 0
\end{displaymath}
where $R_0^*=frac(R_0)\cong L\otimes_{D_L}k(C)$, $l=[D_L:C_L]$ and $\Gamma$  can be chosen to be a difference-differential group morphism.
\end{prop}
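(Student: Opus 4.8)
The plan is to read off the extension from the way elements of $\mathbb{H}(C)$ permute the primitive idempotents of $S':=R\otimes_{C_L}C$, whose total ring of fractions is $\oplus_{i=0}^{l-1}R_i^*$ by Lemma \ref{haciaconcom} and Theorem \ref{decdirectsum}. Write $e_0,\dots,e_{l-1}$ for the primitive idempotents, normalized so that $e_i(L\otimes_{C_L}C)=R_i^*$; by Lemma \ref{haciaconcom} the difference operator permutes them cyclically, $\sigma(e_i)=e_{i+1}$ with indices read mod $l$. Every $g\in\mathbb{H}(C)=Aut_{\sd}(\oplus R_i^*/kC)$ is in particular a ring automorphism, hence permutes the $e_i$; write $g(e_i)=e_{\pi_g(i)}$. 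Since $g$ commutes with $\sigma$ and $\sigma$ induces the $l$-cycle $i\mapsto i+1$, the permutation $\pi_g$ lies in the centralizer of this cycle, which is the cyclic group it generates; thus $\pi_g:i\mapsto i+\Delta(g)$ for a unique $\Delta(g)\in\mathbb{Z}/l\mathbb{Z}$, and $\pi_{g_1g_2}=\pi_{g_1}\pi_{g_2}$ shows $\Delta$ is a homomorphism.

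Next I would construct $\Gamma$ and check exactness in the middle. Given $h\in Aut_{\sigma^l\delta}(R_0^*/kC)$, the operator $\sigma^i$ restricts to a ring isomorphism $R_0^*\to R_i^*$ (because $\sigma(R_i)=R_{i+1}$), so I set $\Gamma(h)|_{R_i^*}=\sigma^i h\sigma^{-i}$. A direct verification shows $\Gamma(h)$ commutes with $\sigma$ and with $\delta$, fixes $kC$ (using that $h$ fixes $e_0\,kC$ and that $\sigma^{-i}$ carries $e_i(a\otimes1)$ to $e_0(\sigma^{-i}(a)\otimes1)$ for $a\in k$), and fixes every $e_i$; hence $\Gamma(h)\in\ker\Delta$, and $\Gamma$ is an injective homomorphism since $\Gamma(h)|_{R_0^*}=h$. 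Conversely, if $g\in\ker\Delta$ then $g$ preserves each $R_i^*$; put $h=g|_{R_0^*}\in Aut_{\sigma^l\delta}(R_0^*/kC)$, and for $x=\sigma^i(y)$ with $y\in R_0^*$ the identity $g\sigma=\sigma g$ gives $g(x)=\sigma^i h(y)=\sigma^i h\sigma^{-i}(x)$, i.e.\ $g=\Gamma(h)$. This proves $\ker\Delta=\mathrm{im}\,\Gamma$, so the sequence is exact at $\mathbb{H}(C)$, and injectivity of $\Gamma$ gives exactness on the left. To see that $\Gamma$ may be taken as a morphism of difference-differential groups, and not merely of abstract groups, I would note that the defining formula $g|_{R_i^*}=\sigma^i h\sigma^{-i}$ is polynomial-differential in the coordinates of $h$.

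The substantive point is surjectivity of $\Delta$: one must produce $g\in\mathbb{H}(C)$ with $\Delta(g)=1$. The idempotents are governed by the finite \'etale part $D_L\otimes_{C_L}C\cong\prod_j C$, on which $\sigma$ acts through $\tau:=\sigma|_{D_L}$, a generator of the cyclic group $Gal(D_L/C_L)$ of order $l=[D_L:C_L]$ (recall $D_L=Fix(\sigma^l)(R^*)$ while $C_L=Fix(\sigma)$). Thus $\tau\otimes\mathrm{id}$ already realizes the $1$-shift of the $e_i$ while fixing $C$, and the task reduces to extending it to a $\sd$-automorphism of all of $\oplus R_i^*$ that fixes $kC$ and commutes with $\sigma$. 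Here I would exploit that the summands $R_i$ are pairwise isomorphic $\sigma^l\delta$-Picard-Vessiot rings over $kC$ (Lemma \ref{haciaconcom}) together with the lifting property of Picard-Vessiot extensions: an automorphism of the normal algebraic subextension $D_L$ lifts to the whole extension. Concretely, in the fundamental-solution picture this means finding $B\in GL_m(C)$ whose action $X\mapsto XB$ induces $\tau$ on $D_L\otimes_{C_L}C$; existence of such $B$ is exactly the surjectivity of $\mathbb{H}(C)\to Gal(D_L/C_L)$, which is where the machinery of \cite{SVDP} (and \cite{CHS}) enters.

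I expect this surjectivity — equivalently, that the number of connected components is \emph{exactly} $l$ and not a proper divisor of it — to be the only real obstacle; everything else in the sequence is formal. A convenient alternative that sidesteps the explicit construction of $B$ would be to argue by cardinality of the orbit: since $\sigma$ itself moves $e_0$ through all $l$ idempotents and each $R_i$ is reached from $R_0$ by an isomorphism compatible with the $kC$-structure, the image of $\Delta$ is a subgroup of $\mathbb{Z}/l\mathbb{Z}$ that must contain the full $\sigma$-orbit of idempotents, hence is all of $\mathbb{Z}/l\mathbb{Z}$.
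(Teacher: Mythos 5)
Your construction of $\Delta$ (via the permutation action of $\mathbb{H}(C)$ on the primitive idempotents and the fact that the centralizer of the $l$-cycle induced by $\sigma$ is the cyclic group it generates) and of $\Gamma$ (via $\Gamma(h)|_{R_i^*}=\sigma^ih\sigma^{-i}$), together with the verification that $\Gamma$ is injective and $\ker\Delta=\mathrm{im}\,\Gamma$, are correct, and they coincide with the template the paper itself follows, namely the proof of Corollary 1.17 in \cite{SVDP}; you even handle two of the three difficulties the paper explicitly flags (the definition of $\Gamma$ and its compatibility with $\delta$). The gap is exactly where you suspect it: surjectivity of $\Delta$, and neither of your two arguments closes it. Your first argument reduces surjectivity of $\Delta$ to surjectivity of $\mathbb{H}(C)\to Gal(D_L/C_L)$ and then appeals to a ``lifting property of Picard--Vessiot extensions'' and to ``the machinery of \cite{SVDP} and \cite{CHS}''. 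But since the idempotents of $L\otimes_{C_L}C$ correspond to the $C_L$-embeddings of $D_L$ into $C$, these two surjectivity statements are the \emph{same} statement, so this is circular; and the lifting property you invoke is a Galois-correspondence fact that in \cite{SVDP} rests on the constants being algebraically closed (difference case) and in \cite{HS} on their being differentially closed. Here $C=C_L^a$ is algebraically closed as a field but \emph{not} differentially closed, so the invariants theorem (``fixed by $\mathbb{H}(C)$ implies in $kC$'') is not available for free -- and it is precisely that theorem which powers the classical argument: if $\mathrm{im}\,\Delta$ were the proper subgroup generated by $d$, then $e_0+e_d+\cdots$ would be an $\mathbb{H}(C)$-invariant idempotent not in $kC$, a contradiction.

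Your ``convenient alternative'' by orbit cardinality is not merely incomplete but fallacious: $\sigma$ is not an element of $\mathbb{H}(C)$ (it is not $k$-linear), and the isomorphisms $\sigma^i:R_0^*\to R_i^*$ are $C$-linear but only $\sigma^i$-semilinear over $k$, so they are \emph{not} ``compatible with the $kC$-structure''. Transitivity of the $\sigma$-action on the $e_i$ therefore puts no constraint at all on $\mathrm{im}\,\Delta$; if it did, the invariants theorem in \cite{SVDP} would be superfluous, since there $\sigma$-transitivity on idempotents is automatic from $\sigma$-simplicity. What is actually needed is a $kC$-linear $\sd$-automorphism realizing the shift in the absence of differentially closed constants, and this is what the paper extracts from Theorem \ref{decdirectsum}: over the differential closure $\hat{C}$ the Hardouin--Singer theory (with its full Galois correspondence) applies, and crucially the decomposition of $S$ there has the \emph{same} number $l$ of summands, so a shift automorphism exists over $\hat{C}$; one then descends to $C$, for instance by noting that the shift condition is detected at a finite level $n$ (where $D_L$ already lives) and using the Zariski density of $\mathbb{H}$ in the algebraic difference Galois groups $\mathbb{H}_n$ (Corollary \ref{intermediategroups}), for which surjectivity onto $\mathbb{Z}/l\mathbb{Z}$ holds by the purely difference-algebraic theory over the algebraically closed field $C$: a proper image would confine $\mathbb{H}$ to a proper Zariski-closed subgroup $\Delta_n^{-1}(\langle d\rangle)$ of $\mathbb{H}_n$. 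Without some such input, the sequence you wrote down is exact at the left and in the middle but its right-hand map has not been shown to be onto.
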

\begin{proof}
The proof is the same as the one of  Corollary 1.17 in \cite{SVDP}, modulo small modifications concerning the extra differential structure involved, as well as the fact that $C$ is not necessarily differentially closed. The difficulties arise in relation with
\benum
\item definition of $\Gamma$,
\item commutativity of the latter and $\delta$,
\item surjectivity of $\Delta$,
\eenum 
but they can be easily solved by applying theorem \ref{decdirectsum}. 
\\The fact that $l=[D_L:C_L]$ follows form \ref{decdirectsum}, as well as lemma \ref{haciaconcom}.
\end{proof}

\newpage
\bibliographystyle{alpha}
\bibliography{finalversionnopreliminaries}

\end{document}